\newtheorem{theorem}{Theorem}[section]
\newtheorem{proposition}[theorem]{Proposition}
\newtheorem{corollary}[theorem]{Corollary}
\theoremstyle{definition}
\theoremstyle{remark}
\newtheorem{remark}[theorem]{Remark}
\numberwithin{equation}{section}
\def\cH{\mathcal{H}}
\def\bh{\mathbb{B}(\cH)}
\def\sh{\mathcal S(\cH)}
\begin{document}

\title{\vspace*{0cm}\textbf{A Gr\"uss type operator inequality}}

\author[T. Bottazzi, C. Conde]{T. Bottazzi$^1$ and C. Conde$^{1,2}$}

\address{$^1$Instituto Argentino de Matem\'atica ``Alberto P. Calder\'on", Saavedra 15 3º piso, (C1083ACA) Buenos Aires, Argentina}

\address{$^2$Instituto de Ciencias, Universidad Nacional de Gral. Sarmiento, J.
M. Gutierrez 1150, (B1613GSX) Los Polvorines, Argentina}

\email{tpbottaz@ungs.edu.ar}
\email{cconde@ungs.edu.ar}

\subjclass[2010]{\textit{Primary}: 39B05, 47A12, 47A30; \textit{Secondary}: 39B42, 47B10.}

\keywords{Gr\"{u}ss inequality; variance; trace inequality, distance formula}

\begin{abstract} 
In [P. Renaud, \textit{A matrix formulation of Gr\"{u}ss inequality,}  Linear Algebra Appl. \textbf{335} (2001), 95--100] it was proved an operator inequality involving the usual trace functional. In this article, we give a refinement of such result and we answer positively the Renaud's open problem.
\end{abstract}

\maketitle

\section{Introduction}

In 1935, Gr\"{u}ss \cite{Gr} obtained the following inequality: if $f, 
g$ are integrable real functions on $[a, b]$ and there exist real constant $\alpha, \beta, \gamma, \delta$ such that
$\alpha\leq f(x)\leq \beta, \gamma\leq g(x)\leq \delta$ for all $x\in [a, b]$ then 
\begin{equation}\label{Gruss}
\left |\frac{1}{b-a}\int_a^b f(x)g(x)dx-\frac{1}{(b-a)^2}\int_a^b f(x)dx\int_a^b g(x)dx\right|\leq \frac14 (\beta-\alpha)(\delta-\gamma),
\end{equation}
and the inequality is sharp, in the sense that the constant $\frac 14$ cannot be replaced by a smaller one. This inequality has been investigated, applied and generalized
by many mathematicians in different areas of mathematics, such as inner product spaces,
quadrature formulae, finite Fourier transforms,  linear functionals, etc.

Along this work $\mathcal{H}$ denotes a (complex, separable) Hilbert
space with inner product $\langle \cdot,\cdot \rangle$. Let
$(\mathbb{B}(\cH), \|\cdot\|)$ be the $C^*$-algebra of all
bounded linear operators acting on  $(\cH,\langle
\cdot,\cdot\rangle)$ with the uniform norm.  We denote by $Id$ the identity operator, and for any $A\in \bh$ we consider $A^*$ its adjoint and $|A| = (A^*A)^{\frac 12}$ the absolute value of $A$.
By $\bh^+$ we denote the
cone of positive  operators of $\bh$, i.e. $\bh^+ := \{T \in \bh : \langle Th, h\rangle\geq 0 \: \forall h\in \cH\}.$ In the case when $\dim \cH = n$, we
identify $\mathbb{B}(\cH)$ with the full matrix algebra
$\mathcal{M}_n$ of all $n\times n$ matrices with entries in the
complex field $\mathbb{C}$. 
For each $T\in \bh$, we denote its spectrum  by $\sigma(T)$, that is,
$\sigma(T) =\{\lambda \in \mathbb C : T -\lambda Id \: \textrm{is  not invertible} \}$ and a complex
number $\lambda\in \mathbb{C}$ is said to be in the approximate point spectrum  of
the operator $T$, and we denote by $\sigma_{ap}(T)$, if there is a sequence $\{x_{n}\}$ of unit vectors satisfying $(T-
\lambda)x_{n} \to 0.$

For each operator $T$ we consider
$$
r(T)=\sup\{|\lambda|: \lambda \in \sigma(T)\} \qquad \qquad \textrm{spectral radius of }T,
$$
 $$
W(T)= \{\langle Th, h \rangle: \|h\|=1 \} \qquad \qquad \textrm{numerical range of }T
$$
and 
$$
w(T)=\sup\{ |\lambda|: \lambda \in W(T)\} \qquad \qquad \textrm{numerical radius of }T.
$$

Recall that for all $T \in \bh$, $r(T)\leq w(T)\leq \|T\| \leq 2w(T)$,  $\sigma(T)\subseteq \overline{W(T)}$ and by the Toeplitz-Hausdorff's Theorem  $W(T)$ is convex.

Renaud \cite{Re} gave a bounded linear operator analogue of Gr\"{u}ss inequality
by replacing integrable functions by operators and the integration by a trace function
as follows:  let $A, T\in \bh$,  suppose that $W (A)$ and $W (T)$ are contained in disks of radii $R_A$ and
$R_T$, respectively. Then for any positive trace class operator $P$  with $tr(P ) = 1$ holds
\begin{equation}\label{Renaud}
 |tr(PAT)-tr(PA)tr(PT)|\leq 4R_AR_T,
\end{equation}
and if  $A$ and $T$ are normal (i.e. $T^*T=TT^*$), the constant 4 can be replaced by 1. We can see can easily see that if $A=\alpha Id$ or $T=\beta Id$ with $\alpha, \beta \in \mathbb C$ then the left hand side is equal to zero.  In the same article, Renaud proposed the following open problem: to characterise $k(A, T)$, where
\begin{equation}\label{Renaudk}
 |tr(PAT)-tr(PA)tr(PT)|\leq k(A, T)R_AR_T,
\end{equation}
 with $1\leq k(A, T)\leq 4$. In particular, whether it depends on $A$ and $T$ separately, i.e.
whether we can write $k(A, T) = h(A)h(T)$, where $h(A), h(B)$ are suitably defined
constants. 

In this paper we give a positive answer to the open problem proposed by Renaud and we obtain an explicit formula for $k(A, T) = h(A)h(T)$. Also, we generalize the inequality \eqref{Renaud} for normal to transloid operators. 

\section{Preliminaries}

Let us begin with the notation and the necessary definitions. 

The set of compact operators in $\cH$ is denoted by $B_0(\cH)$.  If $T\in B_0(\cH)$  we denote by $\{s_n(T)\}$ the sequence of singular values of $T$, i.e., the eigenvalues of $|T|$ (decreasingly ordered). The notion of unitary invariant norms can be defined also for operators
on Hilbert spaces.
A norm $||| . |||$   that satisfies  the invariance property 
$
||| UXV|||=|||X|||.
$
If $\dim R(T) = 1$, then
$|||T||| = s_1(T)g(e_1) = g(e_1)\|T\|.$ By convention, we assume that $g(e_1)=1.$ If $x,y \in \cH$, then we denote $x\otimes y$ the rank one operator defined on $\cH$ by $(x\otimes y)(z)=\langle z, y\rangle x$ then $\|x\otimes y\|=\|x\|\|y\|=|||x\otimes y|||.$

The most known examples of unitary invariant norms are the Schatten $p$-norms  For $1\leq p <\infty$, let
$$\left\|T\right\|_p^p=\sum_n s_n(T)^p=tr\left|X\right|^p,
$$
and
$$B_p(\cH)=\{T\in \cH:\left\|T\right\|_p<\infty\},
$$
called the $p-Schatten \: class$ of $\bh$. That is the subset of compact operators with singular values in $l_p$.  The positive operators  with trace 1 are called density operator (or  states) and we denote this set by $\sh$. The ideal $B_2(\cH)$ is called  the Hilbert-Schmidt class and it is a Hilbert space with the inner product  $\langle S, T\rangle_2=tr(ST^*).$ On the theory of norm ideals and
their associated unitarily invariant norms, a reference for this subject is \cite{[GK]}.

An operator $A\in \bh$ is called normaloid if $r(A)=\left\|A\right\|=\omega(A)$. If $A-\mu Id$ is normaloid for all $\mu \in \mathbb C$, then $A$ is called transloid. 

Finally, for $A, T \in \bh$ and $P\in \sh$ we introduce the following notation
$$
 V_P(A, T)=tr(PAT)-tr(PA)tr(PT).
$$
In the particular case $T=A^*$ we get the variance  of $A$ respect to $P$. More precisely, Audenaert  in \cite{Au} consider the following notion, given $A, P\in\mathcal{M}_n, P\geq 0,   tr(P)=1$  the variance of $A$ respect to the matrix $P$
 $$
V_P(A)=tr(|A|^2P)-|tr(AP)|^2=V_P(A, A^*),
$$

Note that $V_P(A-\lambda Id)=V_P(A)$.
 Futhermore, he   showed that if $A\in \mathcal{M}_n$ then
\begin{equation}\label{Au}
\max\{tr(|A|^2P)-|tr(AP)|^2: P\in\mathcal{M}_n^+,  tr(P)=1\}=dist(A, \mathbb C Id)^2,
\end{equation}
and  the maximization over $P$ on the left hand side   can be restricted to density matrices of rank 1.

\section{Distance formulas and Renaud's inequality}

Let $A$ and $T$ linear bounded operators acting in $\cH$, the vector-function $A-\lambda T$ is known as the pencil generated by $A$ and $T$. Evidently there is at least one complex number $\lambda_0$ such that
$$
\|A-\lambda_0T\|=\inf_{\lambda\in \mathbb C} \|A-\lambda T\|.
$$
The number $\lambda_0$ is unique if $0\notin \sigma_{ap}(T)$ (or equivalently if $\inf\{\|Tx\|: \|x\|=1\}>0$). Different authors, following \cite{St}, called to this unique number as center of mass of $A$ respect to $T$ and we denote by $c(A, T)$ and when $T=Id$ we write $c(A)$. Following Paul,  for $A, T\in \bh$ such that $0 \notin \sigma_{ap}(T)$ we consider
\begin{equation}\label{paul}
M_T(A)=\sup_{\|x\|=1}\left[\|Ax\|^2-\frac{|\langle Ax, Tx\rangle|^2}{\langle Tx, Tx\rangle}\right]^{1/2}=\sup_{\|x\|=1}\left\|Ax-\frac{\langle Ax, Tx\rangle}{\langle Tx, Tx\rangle}Tx\right\|,
\end{equation}
in \cite{Pa}, he proved that $M_T(A)=dist(A,\mathbb CT)$. The unique minimizer  is characterized by the following conditions: there exists a sequence of unit vectores $\{x_n\}$ such that
$$
\|(A-\lambda_0T)x_n\|\to \|A-\lambda_0T\| \qquad \textrm{and} \qquad \langle (A-\lambda_0T)x_n, x_n\rangle \to0.
$$
In \cite{Ge}, Gevorgyan proved that 
\begin{equation}\label{centermass}
c(A, T)=\lim\limits_{n \to \infty}\frac{\langle Ay_n, Ty_n\rangle}{\langle Ty_n, Ty_n\rangle},
\end{equation}
where $\{y_n\}$ is a sequence of unit vectores which approximate the supremum in \eqref{paul}. In the particular case that $T=Id$ and $A$ is a Hermitian operator then it is easy to see that
\begin{eqnarray}
\min_{\lambda\in \mathbb C} \|A-\lambda Id\|= \frac{\lambda_{max}(A)-\lambda_{min}(A)}{2}, 
\end{eqnarray}
where  $\lambda_{max}(A)$ (resp. $\lambda_{max}(A)$) denotes the maximum (resp. mínimum) eigenvalue of $A$. Observe that the minumumis attained at 
$$
c(A)=  \frac{\lambda_{max}(A)+\lambda_{min}(A)}{2}.
$$


%
%

%
%

We recall other formulas that express the distance from $A$ to the one-dimensional subspace $\mathbb C T$. Then
\begin{equation}
dist(A,\mathbb CT)=\sup\{|\langle Ax, y\rangle| : \|x\|=\|y\|=1,\: \langle Tx, y\rangle=0\},
\end{equation}
if $A, T\in \bh$ and $0\notin \sigma_{ap}(T)$. In the particular case, where $T=Id$ we get

\begin{eqnarray}
dist(A,\mathbb CId)&=&\frac 12\sup\{\|AX-XA\|: X\in\bh, \|X\|=1\} \nonumber \\
&=& \sup \{\|(Id-Q)AQ\|:  Q\:  \textrm{is a rank one projection}\} \nonumber\\
&=& \sup \{\|(Id-Q)AQ\|:  Q\:  \textrm{is a  projection}\}.
\end{eqnarray}

In the following statement we present a new proof of the relation between the variance of $A$ respect to $P$ and the distance from $A$ to the unidimensional subspace $\mathbb C Id$.

 \begin{proposition}\label{newdragomir}
Let $A\in \bh$ and  $P\in\sh$ then
\begin{eqnarray}
tr(|A|^2P)-|tr(AP)|^2&=&\|AP^{1/2}\|_2^2- |\langle AP^{1/2}, P^{1/2} \rangle_2|^2 \nonumber \\
&=&\| AP^{1/2}- \langle AP^{1/2}, P^{1/2} \rangle_2 P^{1/2}\|_2^2\nonumber \\
&=& \min_{\lambda\in \mathbb C} \|AP^{1/2}-\lambda P^{1/2}\|_2^2\leq \min_{\lambda\in \mathbb C} \|A-\lambda Id\|. \nonumber \
\end{eqnarray}
\end{proposition}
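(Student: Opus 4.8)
The plan is to read the whole chain as a computation in the Hilbert--Schmidt space $B_2(\cH)$, whose inner product is $\langle S,T\rangle_2 = tr(ST^*)$, and only at the very end to compare the Hilbert--Schmidt norm with the operator norm. Throughout I set $u = AP^{1/2}$ and $v = P^{1/2}$, and I first record that $v$ is a unit vector of $B_2(\cH)$, since $\|v\|_2^2 = tr(P^{1/2}P^{1/2}) = tr(P) = 1$.

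For the first equality I would simply unwind the definitions using the cyclicity of the trace and the self-adjointness of $P^{1/2}$. Indeed $\|u\|_2^2 = tr(AP^{1/2}P^{1/2}A^*) = tr(APA^*) = tr(A^*AP) = tr(|A|^2P)$, while $\langle u,v\rangle_2 = tr(AP^{1/2}P^{1/2}) = tr(AP)$; taking the modulus squared of the latter produces $|tr(AP)|^2$. The second equality is then the elementary Pythagorean identity $\|u - \langle u,v\rangle_2 v\|_2^2 = \|u\|_2^2 - |\langle u,v\rangle_2|^2$, valid for any unit vector $v$ of a Hilbert space, applied to the pair $(u,v)$ above.

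For the third equality I would invoke the orthogonal-projection (best-approximation) theorem in $B_2(\cH)$: since $\langle u,v\rangle_2\, v$ is the orthogonal projection of $u$ onto the one-dimensional subspace $\mathbb C v = \mathbb C P^{1/2}$, it is the unique minimizer of $\lambda \mapsto \|u - \lambda v\|_2$, whence $\|u - \langle u,v\rangle_2 v\|_2^2 = \min_{\lambda}\|u - \lambda v\|_2^2 = \min_{\lambda}\|AP^{1/2} - \lambda P^{1/2}\|_2^2$.

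The last estimate is where the two norms meet, and it is the only genuinely nontrivial step. Fixing $\lambda$ and writing $B = A - \lambda Id$, so that $AP^{1/2} - \lambda P^{1/2} = BP^{1/2}$, I would compute $\|BP^{1/2}\|_2^2 = tr(BPB^*) = tr(|B|^2 P)$, and then use the operator inequality $|B|^2 = B^*B \leq \|B\|^2\, Id$ together with $P\geq 0$ and $tr(P)=1$ to conclude $tr(|B|^2P) \leq \|B\|^2\, tr(P) = \|A-\lambda Id\|^2$. Minimizing over $\lambda$ yields $\min_{\lambda}\|AP^{1/2} - \lambda P^{1/2}\|_2^2 \leq (\min_{\lambda}\|A-\lambda Id\|)^2 = dist(A,\mathbb C Id)^2$, which is the intended bound, consistent with Audenaert's formula \eqref{Au} (so the right-hand side should carry the square). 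The point to watch is precisely this passage from the Hilbert--Schmidt norm on the left to the spectral quantity on the right, since it is the crude operator bound $|B|^2 \le \|B\|^2\, Id$ that collapses one into the other.
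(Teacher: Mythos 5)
Your proof is correct and takes essentially the same route as the paper, which likewise reduces the whole chain to the general Hilbert-space best-approximation identity $\inf_{\lambda\in\mathbb C}\|x-\lambda y\|^2=\left(\|x\|^2\|y\|^2-|\langle x,y\rangle|^2\right)/\|y\|^2$ applied in $B_2(\cH)$ with $x=AP^{1/2}$ and $y=P^{1/2}$. You are also right that the final bound should carry a square, i.e.\ $\min_{\lambda}\|A-\lambda Id\|^2=dist(A,\mathbb C Id)^2$ (the displayed statement omits it), and your explicit last step via $|B|^2\le\|B\|^2 Id$ and $tr(P)=1$ supplies the detail the paper leaves implicit.
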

\begin{proof}
These inequalities are simple consequences from  following general statement for any Hilbert space $\cH$: let $x,y \in \cH$ with $y\neq 0$ then
\begin{equation}\label{Hilbert}
\inf_{\lambda \in \mathbb C} \|x-\lambda y\|^2=\frac{\|x\|^2 \|y\|^2- |\langle x,y\rangle|^2}{\|y\|^2}. \nonumber\
\end{equation}

\end{proof}

The following statement is an extension of the Audenaert's formula to infinite dimension.

\begin{remark}[Audenaert's formula for infinite dimensional spaces]
We exhibit that the equality \eqref{Au} holds in infinite dimensional context, that is for $A\in \bh$ holds
\begin{equation}\label{Auinf}
\sup\{[ tr(|A|^2P)-|tr(AP)|^2]^{1/2}: P\in\sh\}=dist(A, \mathbb C Id).
\end{equation}
First, we obtain this equality from a Prasanna's result in \cite{Pr}. Indeed, note that 
\begin{eqnarray}
dist(A, \mathbb C Id)^2&=&\sup_{\|x\|=1} \|Ax\|^2-|\langle Ax, x\rangle|^2\nonumber\\
&\leq &\sup\{tr(|A|^2P)-|tr(AP)|^2: P\in\sh\}\nonumber \\
&\leq& dist(A, \mathbb C Id)^2.\nonumber \
\end{eqnarray}
On the other hand, another way to prove \eqref{Auinf} is to reduce the problem to finite dimension and use the classical Audenaert's formula. Now we give the idea of this proof.

For the sake of clarity, we denote
$$
m:=\min_{\lambda\in \mathbb C} \|A-\lambda Id\|
$$
and 
$$
M:=\sup\{[ tr(|A|^2P)-|tr(AP)|^2]^{1/2}: P\in \sh\}.
$$
By Proposition \ref{newdragomir} we have that $M\leq m$. Suppose  by contradiction that $M < m$ then there exists $\epsilon >0$ such that
\begin{equation}\label{epsilon}
M<  \|A-\lambda Id\| -\epsilon,
\end{equation}
for any $\lambda \in  \mathbb C.$ By the equality \eqref{centermass}, we have that $c(A)\in  \overline{W(A)} $ and then $|c(A)|\leq w(A)$.
As any closed ball in the complex plane is a  compact set, we can find  $\lambda_1, ..., \lambda_m \in \cH$ such that
$$
B(0, \omega(A))\subseteq \cup_{j=1}^m \{ \lambda \in  \mathbb C: |\lambda - \lambda_j|<\frac{\epsilon}{2}\}.
$$
Now, we choose unit vectors $h_1, ..., h_m \in \cH$ with the following property: $\|(A-\lambda_jId)h_j\|>\|A-\lambda_jId\|-\frac{\epsilon}{2}.$ Let $\cH'=gen\{h_1, ..., h_m, Ah_1, ..., Ah_m\}$ and $n=\dim \cH'$. Applying \eqref{Au} to the compressions of $A$ and $Id$ respectively, we get
\begin{equation}\label{Aufinito}
dist(A', \mathbb C Id_n)=\max\{[ tr(|A'|^2P')-|tr(A'P')|^2]^{1/2}: P' \in\mathcal{M}_n^+, tr(P')=1\}=M'.
\end{equation}
One easily verifies that if $\lambda \in B(0, \omega(A))$ there exists $j\in \{1, ..., m\}$ such that 
\begin{eqnarray}\label{ineqbola}
\|A'-\lambda Id_n\|&>&\|A'-\lambda_jId_n\|-\frac{\epsilon}{2}\geq \|(A'-\lambda_jId_n)h_j\|-\frac{\epsilon}{2}\nonumber \\
&=&\|(A-\lambda_jId)h_j\|-\frac{\epsilon}{2}>\|A-\lambda_jId\|-\epsilon.
\end{eqnarray}
Thus, combining \eqref{epsilon} and \eqref{ineqbola} we get
\begin{eqnarray}
\min_{\lambda\in \mathbb C} \|A'-\lambda Id_n\|>M\geq M',
\end{eqnarray}
and we have here a contradiction with \eqref{Aufinito}, therefore $m=M.$ 

\end{remark}
The next result gives and upper bound for $V_P(A, T)$.
\begin{proposition}
Let $A, T\in \bh$ and $P\in \sh$. Then, for any $\lambda,\mu\in \mathbb{C}$ holds
\begin{eqnarray}\label{refRenuad1}
|V_P(A, T)|&\leq& \left(A-\lambda Id, A-\lambda Id\right)_{2,P}^{1/2}(T^*-\bar{\mu}Id,T^*-\bar{\mu}Id)_{2,P}^{1/2}-G_{Id}(A-\lambda Id,T^*-\bar{\mu}Id)\nonumber\\
&\leq& \left\|A-\lambda Id\right\|\left\|T-\mu Id\right\|-\left|tr\left(P(A-\lambda Id)\right)tr\left(P(T-\mu Id)\right)\right|,
\end{eqnarray}
with $G_{Id}(A-\lambda Id,T^*-\bar{\mu}Id)=\left|tr\left(P(A-\lambda Id)\right)tr\left(P(T-\mu Id)\right)\right|$.

Therefore,
\begin{equation}\label{refRenuad}
|V_P(A, T)| \leq \sup_{\widetilde{P}\in \sh} |tr(\widetilde{P}AT)-tr(\widetilde{P}A)tr(\widetilde{P}T)|\leq dist(A, \mathbb C Id)dist(T, \mathbb C Id).
\end{equation}
\end{proposition}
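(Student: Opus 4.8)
The plan is to transport the whole problem into the Hilbert--Schmidt space $B_2(\cH)$, where both inequalities become a single instance of the Gr\"uss inequality for one inner product, followed by an elementary real-variable estimate. Fix $P\in\sh$ and $\lambda,\mu\in\mathbb C$, and set
$$
e=P^{1/2},\qquad u=P^{1/2}(A-\lambda Id),\qquad v=P^{1/2}(T^*-\bar\mu Id),
$$
viewed as elements of $B_2(\cH)$ with $\langle S,R\rangle_2=tr(SR^*)$. First I would record the trace identities obtained from cyclicity and $(P^{1/2})^*=P^{1/2}$: $\|e\|_2^2=tr(P)=1$, $\langle u,e\rangle_2=tr(P(A-\lambda Id))$, $\langle e,v\rangle_2=tr(P(T-\mu Id))$, and $\langle u,v\rangle_2=tr(P(A-\lambda Id)(T-\mu Id))$. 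Subtracting and using the shift invariance $V_P(A-\lambda Id,T-\mu Id)=V_P(A,T)$ (a one-line expansion using $tr(P)=1$) yields the key identity
$$
V_P(A,T)=\langle u,v\rangle_2-\langle u,e\rangle_2\langle e,v\rangle_2 .
$$
With $(X,Y)_{2,P}:=\langle P^{1/2}X,P^{1/2}Y\rangle_2=tr(PXY^*)$ one has $\|u\|_2^2=(A-\lambda Id,A-\lambda Id)_{2,P}$ and $\|v\|_2^2=(T^*-\bar\mu Id,T^*-\bar\mu Id)_{2,P}$, so the quantities in \eqref{refRenuad1} are exactly the norms of $u$ and $v$.

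Next I would apply the Gr\"uss inequality in $B_2(\cH)$: since $e$ is a unit vector, Cauchy--Schwarz applied to $u-\langle u,e\rangle_2 e$ and $v-\langle v,e\rangle_2 e$ gives
$$
|V_P(A,T)|\le\bigl(\|u\|_2^2-|\langle u,e\rangle_2|^2\bigr)^{1/2}\bigl(\|v\|_2^2-|\langle v,e\rangle_2|^2\bigr)^{1/2}.
$$
To match the first line of \eqref{refRenuad1} I would then invoke the elementary inequality $\sqrt{(a-p)(b-q)}\le\sqrt{ab}-\sqrt{pq}$, valid for $0\le p\le a$ and $0\le q\le b$; after squaring it reduces to the AM--GM bound $aq+pb\ge 2\sqrt{abpq}$, while the constraints $p\le a$, $q\le b$ are precisely Cauchy--Schwarz $|\langle u,e\rangle_2|\le\|u\|_2$. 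Taking $a=\|u\|_2^2$, $b=\|v\|_2^2$, $p=|\langle u,e\rangle_2|^2$, $q=|\langle v,e\rangle_2|^2$ produces the first inequality, since $\sqrt{pq}=|tr(P(A-\lambda Id))tr(P(T-\mu Id))|=G_{Id}(A-\lambda Id,T^*-\bar\mu Id)$.

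For the second line of \eqref{refRenuad1} the term $G_{Id}$ is unchanged, so it suffices to majorize the two weighted norms by operator norms: from $0\le(A-\lambda Id)(A-\lambda Id)^*\le\|A-\lambda Id\|^2Id$ and $tr(P)=1$ one gets $\|u\|_2^2=tr\bigl(P(A-\lambda Id)(A-\lambda Id)^*\bigr)\le\|A-\lambda Id\|^2$, and likewise $\|v\|_2^2\le\|T-\mu Id\|^2$. Finally \eqref{refRenuad} is a corollary: the left inequality is trivial because $P\in\sh$, and for the right one I would apply the second line of \eqref{refRenuad1} to an arbitrary $\widetilde P\in\sh$, discard the nonnegative term $G_{Id}$, and optimize the separated product using $\inf_{\lambda}\|A-\lambda Id\|=dist(A,\mathbb C Id)$ and $\inf_{\mu}\|T-\mu Id\|=dist(T,\mathbb C Id)$; taking the supremum over $\widetilde P$ completes the chain.

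The computations are routine once $u,v,e$ are chosen, so the one genuinely delicate point is getting the orderings and adjoints right so that $\langle u,v\rangle_2$ reproduces $tr(P(A-\lambda Id)(T-\mu Id))$ rather than the cyclically permuted $tr(P(T-\mu Id)(A-\lambda Id))$. This is what forces left multiplication by $P^{1/2}$ and the conjugate shift $T^*-\bar\mu Id$ in the second slot; a careless choice would give the variance of the wrong product and break the identification with $V_P(A,T)$.
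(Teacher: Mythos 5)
Your proposal is correct and follows essentially the same route as the paper: the authors also work with the semi-inner product $(X,Y)_{2,P}=\langle P^{1/2}X,P^{1/2}Y\rangle_2$, apply it to $X=A-\lambda Id$, $Y=(T-\mu Id)^*$, $E=Id$, and then majorize the $P$-weighted norms by operator norms before taking infima over $\lambda,\mu$. The only difference is that where the paper cites Dragomir's refinement of the Schwarz inequality, you prove it inline via Cauchy--Schwarz on $u-\langle u,e\rangle_2 e$ and $v-\langle v,e\rangle_2 e$ together with the elementary bound $\sqrt{(a-p)(b-q)}\leq\sqrt{ab}-\sqrt{pq}$, which makes the argument self-contained.
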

\begin{proof}
Define the following semi-inner product for $X,Y\in \bh$ and $P\in \sh$:
$$(X,Y)_{2,P}=\left\langle P^{1/2}X,P^{1/2}Y\right\rangle_2.$$
Following the proof given by Dragomir in [\cite{Dr2},Theorem 2], holds for any $E\in \bh$ such that $(E,E)_{2,P}=1$
\begin{eqnarray}\label{drago inner}
\left|(X,Y)_{2,P}-(X,E)_{2,P}(E,Y)_{2,P}\right|&\leq& (X,X)_{2,P}^{1/2}(Y,Y)_{2,P}^{1/2}-\left|(X,E)_{2,P}(E,Y)_{2,P}\right|.\nonumber\\
&=& (X,X)_{2,P}^{1/2}(Y,Y)_{2,P}^{1/2}-G_E(X,Y).
\end{eqnarray}
Since $(Id,Id)_{2,P}=1$, then
\begin{eqnarray}
|V_P(A, T)|&=&|V_P(A-\lambda Id, T-\mu Id)|\nonumber \\
&=& \left|(A-\lambda Id,(T-\mu Id)^*)_{2,P}-(A-\lambda Id,Id)_{2,P}(Id,(T-\mu Id)^*)_{2,P}\right|\nonumber\\
&\leq& \left(A-\lambda Id, A-\lambda Id\right)_{2,P}^{1/2}(T^*-\bar{\mu} Id,T^*-\bar{\mu} Id)_{2,P}^{1/2}-G_{Id}(A-\lambda Id,T^*-\bar{\mu}Id)\nonumber\\
&=& tr\left(P|(A-\lambda Id)^*|^2\right)^{1/2}tr\left(P|T-\mu Id|^2\right)^{1/2}-G_{Id}(A-\lambda Id,T^*-\bar{\mu}Id)\nonumber\\
&\leq& \left\||(A-\lambda Id)^*|^2\right\|^{1/2}\left\||T-\mu Id|^2\right\|^{1/2}-\left|tr\left(P(A-\lambda Id)\right)tr\left(P(T-\mu  Id)\right)\right|\nonumber\\
&=& \left\|A-\lambda Id\right\|\left\|T-\mu Id\right\|-\left|tr\left(P(A-\lambda Id)\right)tr\left(P(T-\mu Id)\right)\right|.
\end{eqnarray}
Therefore,
$$\sup_{\widetilde{P}\in \sh} |tr(\widetilde{P}AT)-tr(\widetilde{P}A)tr(\widetilde{P}T)|\leq dist(A, \mathbb C Id) dist(T, \mathbb C Id).$$
 
\end{proof}

\begin{remark}
If we define $V_P:\bh\times \bh \to\mathbb C$, $V_P(A, T):=tr(PAT)-tr(PA)tr(PT)$. 
Then $V_P$ is a bilinear function and by \eqref{refRenuad} a continuous mapping with $\|V_P\|\leq 1.$
\end{remark}
  Now, we give a new proof and a refinement  of \eqref{Renaud}. 
  
  \begin{proposition}
  Let $A, T \in \bh$ and we suppose that $W(A), W(T)$ are contained in closed disk $D(\lambda_0, R_A), D(\mu_0, R_T)$ respectively. Then for any $P\in \sh$
\begin{eqnarray}\label{refRe}
|tr(PAT)-tr(PA)tr(PT)|&\leq&\sup_{\widetilde{P}\in \sh} |tr(\widetilde{P}AT)-tr(\widetilde{P}A)tr(\widetilde{P}T)|\nonumber \\
&\leq& dist(A, \mathbb C Id)dist(T, \mathbb C Id)\nonumber\\
&\leq& \|A-\lambda_0 Id\| \|T-\mu_0 Id\|\nonumber\\
&\leq& 4w(A-\lambda_0 Id)w(T-\mu_0 Id)\nonumber\\
&\leq& 4R_AR_T.
\end{eqnarray}
In particular, if $A$ and $T$ are normal operators, we have
\begin{eqnarray}\label{refRenormal}
|tr(PAT)-tr(PA)tr(PT)|&\leq&\sup_{\widetilde{P}\in \sh} |tr(\widetilde{P}AT)-tr(\widetilde{P}A)tr(\widetilde{P}T)|\nonumber \\
&\leq& dist(A, \mathbb C Id)dist(T, \mathbb C Id)=r_Ar_T,
\end{eqnarray}
where  $r_S$ denotes the radius of the unique  smallest disc containing $\sigma(S)$ for any $S\in \bh$.
 \end{proposition}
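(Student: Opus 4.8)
The plan is to read off the displayed chain \eqref{refRe} link by link, since every inequality is either immediate from a definition or a direct quotation of a fact already established. First, because $P$ itself is an element of $\sh$, it is one of the competitors in the supremum, so $|tr(PAT)-tr(PA)tr(PT)|\le \sup_{\widetilde P\in\sh}|tr(\widetilde PAT)-tr(\widetilde PA)tr(\widetilde PT)|$ holds trivially. The second inequality, bounding this supremum by $dist(A,\mathbb C Id)\,dist(T,\mathbb C Id)$, is exactly the content of \eqref{refRenuad} from the preceding proposition, so I would simply invoke it.

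For the third inequality I would use that $dist(A,\mathbb C Id)=\inf_{\lambda\in\mathbb C}\|A-\lambda Id\|\le\|A-\lambda_0 Id\|$ (and likewise for $T$ with $\mu_0$), which holds for the particular centre $\lambda_0$ of the disk; multiplying the two estimates gives $dist(A,\mathbb C Id)\,dist(T,\mathbb C Id)\le\|A-\lambda_0 Id\|\,\|T-\mu_0 Id\|$. The fourth inequality follows from the general bound $\|S\|\le 2w(S)$ recorded in the introduction, applied to $S=A-\lambda_0 Id$ and $S=T-\mu_0 Id$, which supplies the factor $4$. For the last inequality I would observe that $W(A-\lambda_0 Id)=W(A)-\lambda_0\subseteq D(0,R_A)$ because $W(A)\subseteq D(\lambda_0,R_A)$, hence $w(A-\lambda_0 Id)\le R_A$ and similarly $w(T-\mu_0 Id)\le R_T$; this closes the chain with $4R_AR_T$.

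For the normal case \eqref{refRenormal} the first two links are identical, so the only new task is the equality $dist(A,\mathbb C Id)\,dist(T,\mathbb C Id)=r_Ar_T$. Here I would use that a normal operator is normaloid, so $\|A-\lambda Id\|=r(A-\lambda Id)=\max_{z\in\sigma(A)}|z-\lambda|$ (using $\sigma(A-\lambda Id)=\sigma(A)-\lambda$ and compactness of the spectrum). Consequently $dist(A,\mathbb C Id)=\inf_{\lambda\in\mathbb C}\max_{z\in\sigma(A)}|z-\lambda|$, which is precisely the Chebyshev radius of the compact planar set $\sigma(A)$, i.e. the radius $r_A$ of the smallest enclosing disk; the same argument gives $dist(T,\mathbb C Id)=r_T$.

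The main point requiring care — rather than a genuine obstacle — is this last identification: I would need to justify that for a nonempty compact planar set the Chebyshev radius coincides with the circumradius of the \emph{unique} smallest enclosing disk, and that the minimizing $\lambda$ is attained. This is where the uniqueness statement for the center of mass $c(A)$, valid since $0\notin\sigma_{ap}(Id)$, naturally enters and pins down the center of that disk. Everything else is bookkeeping on top of \eqref{refRenuad} and the elementary spectral and numerical-range inequalities already collected in the preliminaries.
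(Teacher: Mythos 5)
Your proof is correct and follows essentially the same route as the paper: the chain is read off from the preceding proposition's bound \eqref{refRenuad}, the infimum defining $dist(\cdot,\mathbb{C}Id)$, the inequality $\|S\|\le 2w(S)$, and the translation $W(A-\lambda_0 Id)=W(A)-\lambda_0\subseteq D(0,R_A)$. The only divergence is at the final equality $dist(A,\mathbb{C}Id)=r_A$ for normal $A$: the paper simply cites Bj\"orck--Thom\'ee, whereas you rederive it (normality of $A-\lambda Id$ gives $\|A-\lambda Id\|=\max_{z\in\sigma(A)}|z-\lambda|$, and the infimum over $\lambda$ is the Chebyshev radius of $\sigma(A)$), which is a correct, self-contained substitute for the citation.
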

 \begin{proof}
 The inequalities are consequence of \eqref{refRenuad}.
In the last inequality we use that $W(A-\lambda_0 Id)\subset D(0, R_A)$ and $W(T-\mu_0 Id)\subset D(0, R_T)$ respectively. 

On the other hand,  Bj\"{o}rck and  Thom\'ee \cite{BT} have shown that for a normal operator $A$
\begin{eqnarray}\label{BT}
dist(A, \mathbb C Id)=\sup_{\|x\|=1}( \|Ax\|^2-|\langle Ax, x\rangle|^2)^{1/2}= r_A,
\end{eqnarray}
and this completes the proof.

\end{proof}

\begin{remark}
From \eqref{refRenormal}, if we consider $A$ is a positive invertible operator, $T=A^{-1}$ and $P=x\otimes x$ with  $x\in\cH $ with $\|x\|=1$, then
\begin{eqnarray}
|tr(PAT)-tr(PA)tr(PT)|&=&|1-\langle Ax, x\rangle\langle A^{-1}x, x\rangle|\nonumber \\
&\leq& dist(A, \mathbb C Id)dist(A^{-1}, \mathbb C Id)=r_Ar_{A^{-1}},\nonumber\
\end{eqnarray}
i.e. we obtain the Kantorovich inequality for an operator $A$ acting on an infinite
dimensional Hilbert space $\cH$ with $0<m\leq A\leq M$.
\end{remark}
In 1972, Istratescu (\cite{I}) generalized the equality \eqref{BT}  to the transloid class operators,  then we have the following statement:
\begin{proposition}
Let $A, T \in \bh$ with $A$ and $T$ transloid operators then
\begin{eqnarray}\label{}
|tr(PAT)-tr(PA)tr(PT)|&\leq&\sup_{\widetilde{P}\in \sh} |tr(\widetilde{P}AT)-tr(\widetilde{P}A)tr(\widetilde{P}T)|\nonumber \\
&\leq& dist(A, \mathbb C Id)dist(T, \mathbb C Id)=r_Ar_T.
\end{eqnarray}
\end{proposition}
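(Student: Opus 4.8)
The plan is to mirror the proof of the normal case \eqref{refRenormal}, replacing the Bj\"ork--Thom\'ee identity \eqref{BT} by Istratescu's extension of it to the transloid class. The leftmost inequality is immediate, since the given $P$ is itself one of the competitors in the supremum over $\widetilde{P}\in\sh$. The middle inequality is nothing but \eqref{refRenuad}, applied to the pair $A,T$, and so requires no new work; it already gives
\[
\sup_{\widetilde{P}\in \sh} |tr(\widetilde{P}AT)-tr(\widetilde{P}A)tr(\widetilde{P}T)| \leq dist(A, \mathbb C Id)\, dist(T, \mathbb C Id).
\]

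The only substantive point is the final equality $dist(A, \mathbb C Id)\, dist(T, \mathbb C Id)=r_A r_T$, which follows once one establishes $dist(S, \mathbb C Id)=r_S$ for an arbitrary transloid $S$. I would argue this directly from the definitions. Since $S$ is transloid, $S-\lambda Id$ is normaloid for every $\lambda\in\mathbb C$, whence $\|S-\lambda Id\|=r(S-\lambda Id)$. Using the spectral shift $\sigma(S-\lambda Id)=\sigma(S)-\lambda$ we get $\|S-\lambda Id\|=\sup\{|w-\lambda|:w\in\sigma(S)\}$, and taking the infimum over $\lambda$ together with $dist(S,\mathbb C Id)=\inf_{\lambda}\|S-\lambda Id\|$ yields
\[
dist(S, \mathbb C Id)=\inf_{\lambda\in\mathbb C}\ \sup_{w\in\sigma(S)}|w-\lambda|.
\]
The right-hand side is precisely the Chebyshev radius of the compact set $\sigma(S)$, i.e.\ the radius $r_S$ of the smallest closed disc containing $\sigma(S)$; this is exactly the content of Istratescu's generalization of \eqref{BT}. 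Applying the identity to both $A$ and $T$ and multiplying finishes the argument.

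The one delicate step is this last identification: one must confirm that the minimax $\inf_{\lambda}\sup_{w\in\sigma(S)}|w-\lambda|$ really equals the radius of the smallest enclosing disc, and that this disc—hence its centre, the Chebyshev centre of $\sigma(S)$—is well defined. This is a standard compactness-and-strict-convexity fact for nonempty compact subsets of $\mathbb C$: the function $\lambda\mapsto\sup_{w\in\sigma(S)}|w-\lambda|$ is continuous, coercive and strictly convex, so it attains its minimum at a unique point, whose value is by definition $r_S$. Because $\sigma(S)$ is nonempty and compact for every bounded $S$, no extra hypothesis is required. I expect this identification to be the main (though essentially routine) obstacle; everything else reduces to a direct appeal to \eqref{refRenuad} and to the normaloid identity $\|S-\lambda Id\|=r(S-\lambda Id)$ valid throughout the transloid class.
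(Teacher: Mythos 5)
Your proof is correct and follows essentially the same route as the paper, whose entire proof is to repeat the argument for the normal case with the Bj\"orck--Thom\'ee identity \eqref{BT} replaced by Istratescu's transloid version, combined with \eqref{refRenuad}. The only difference is that you derive the identity $dist(S, \mathbb C Id)=r_S$ directly from the normaloid property of $S-\lambda Id$ and the spectral mapping theorem rather than citing \cite{I}; this derivation is sound, with the minor caveat that $\lambda\mapsto\sup_{w\in\sigma(S)}|w-\lambda|$ is convex but not in general \emph{strictly} convex (uniqueness of the Chebyshev centre follows instead from the parallelogram law), which does not affect the value of the infimum --- the only thing the statement requires.
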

\begin{proof}
It follows from the same arguments in  the proof of inequality \eqref{refRenormal}.
\end{proof}
The previous proposition generalizes the Renaud's result for normal operators, since the classes of transloid and normal operators are related by the inclusion as follows
$$\text{normal}\subseteq \text{quasinormal} \subseteq \text{subnormal} \subseteq \text{hyponormal} \subseteq \text{transloid},$$
where at least the first inclusion is proper.

In the following statement we obtain a parametric refinement of \eqref{Renaud}. 

\begin{theorem} \label{mainRenaud}
Let $A, T \in \bh$ with $A, T \notin \mathbb C Id$ and suppose that $W(A), W(T)$ are contained in the closed disk $D(\lambda_0, R_A)$ and $D(\mu_0, R_T)$ respectively. Thus for any $P\in \sh$ we get
\begin{eqnarray}
|tr(PAT)-tr(PA)tr(PT)|&\leq&\sup_{\widetilde{P}\in \sh} |tr(\widetilde{P}AT)-tr(\widetilde{P}A)tr(\widetilde{P}T)|\nonumber \\
&\leq& dist(A, \mathbb C Id)dist(T, \mathbb C Id)\nonumber\\
&\leq& h_{\lambda}(A)h_{\mu}(T)\omega(A-\lambda_0 Id)\omega(T-\mu_0 Id)\nonumber\\
&\leq& h_{\lambda}(A)h_{\mu}(T)R_AR_T,
\end{eqnarray}
where 
\begin{eqnarray}
h_{\lambda}(A)=2(1-\lambda)+\lambda\frac{\|A-c(A) Id\|}{w(A-\lambda_0Id)},& h_{\mu}(T)=2(1-\mu)+\mu\frac{\|T-c(T) Id\|}{w(T-\mu_0Id)}\nonumber\
\end{eqnarray}
and $1\leq h_{\lambda}(A)h_{\mu}(T)\leq 4$, for any $\lambda,\mu\in [0,1]$.
\end{theorem}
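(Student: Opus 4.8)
The first two inequalities require no new work: the first holds because the fixed state $P$ is among the competitors in the supremum over $\widetilde{P}\in\sh$, and the second is precisely the conclusion \eqref{refRenuad} established in the earlier Proposition. Hence the whole content of the statement is carried by the last two inequalities together with the two-sided estimate $1\leq h_\lambda(A)h_\mu(T)\leq 4$. Throughout I would abbreviate $d_A:=dist(A,\mathbb C Id)=\|A-c(A)Id\|$ and $w_A:=w(A-\lambda_0 Id)$, and likewise $d_T,w_T$ for $T$, so that all four quantities are genuine numbers because $A,T\notin\mathbb C Id$ force $d_A,d_T>0$.

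For the third inequality the plan is to prove the one-variable estimate $d_A\leq h_\lambda(A)\,w_A$ (and the identical one for $T$) and then multiply. Substituting $h_\lambda(A)=2(1-\lambda)+\lambda\,d_A/w_A$ gives $h_\lambda(A)\,w_A=2(1-\lambda)w_A+\lambda d_A$, so the target inequality collapses to $(1-\lambda)d_A\leq 2(1-\lambda)w_A$, which for $\lambda\in[0,1)$ is just $d_A\leq 2w_A$ and for $\lambda=1$ is a trivial identity. Thus the only thing to check is $d_A\leq 2w_A$, and I would obtain this from the chain $\|A-c(A)Id\|\leq\|A-\lambda_0 Id\|\leq 2\,w(A-\lambda_0 Id)$, using that $c(A)$ minimizes $\|A-\mu Id\|$ over $\mu$ and the standard bound $\|S\|\leq 2w(S)$ recalled in the introduction.

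The fourth inequality needs only $w_A\leq R_A$ and $w_T\leq R_T$ together with nonnegativity of $h_\lambda(A)$ and $h_\mu(T)$. The former is immediate from $W(A)\subseteq D(\lambda_0,R_A)$, since $w_A=\sup\{|z-\lambda_0|:z\in W(A)\}\leq R_A$, and symmetrically for $T$; multiplying through by the nonnegative factors finishes this step.

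Finally, for the bound on the constant I would show $1\leq h_\lambda(A)\leq 2$ (and the same for $T$), whence the product lies in $[1,4]$. Since $h_\lambda(A)$ is linear in $\lambda$ and interpolates between $h_0(A)=2$ and $h_1(A)=d_A/w_A$, both bounds reduce to the single sandwich $1\leq d_A/w_A\leq 2$. The upper half $d_A\leq 2w_A$ is exactly what was proved above. The lower half $w_A\leq d_A$ is the delicate point, and I expect it to be the main obstacle: here I would use that $D(\lambda_0,R_A)$ is the \emph{smallest} disk containing $W(A)$, so that $w_A=R_A=\min_\mu w(A-\mu Id)$; choosing $\mu=c(A)$ then yields $w_A\leq w(A-c(A)Id)\leq\|A-c(A)Id\|=d_A$, the last step being again that the numerical radius is dominated by the operator norm. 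This minimality of the enclosing disk is precisely what keeps $w_A$ from overshooting $d_A$ — an off-center choice of $\lambda_0$ could force $h_1(A)<1$ — and it is the one geometric hypothesis on which the lower estimate genuinely depends.
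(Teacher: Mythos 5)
Your treatment of the displayed chain coincides with the paper's own proof: the first two inequalities are quoted from \eqref{refRenuad}, the third is the one--variable bound $dist(A,\mathbb C Id)\le h_{\lambda}(A)\,w(A-\lambda_0 Id)$, which the paper obtains from the convex combination $\|A-c(A)Id\|\le \lambda\|A-c(A)Id\|+(1-\lambda)\|A-\lambda_0 Id\|\le \lambda\|A-c(A)Id\|+2(1-\lambda)w(A-\lambda_0 Id)$ --- exactly the identity you reduce to $d_A\le 2w_A$ --- and the fourth is $w(A-\lambda_0 Id)\le R_A$. On this part you and the paper are doing the same thing, and your reduction is correct.

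Where you genuinely add something is the claim $1\le h_{\lambda}(A)h_{\mu}(T)$. The paper asserts $1\le h_{\lambda}(A)\le 2$ citing only the chain $\|A-c(A)Id\|\le\|A-\lambda_0 Id\|\le 2w(A-\lambda_0 Id)$; that chain yields the upper bound $h_{\lambda}(A)\le 2$ but says nothing about the lower bound, which amounts to $w(A-\lambda_0 Id)\le\|A-c(A)Id\|$. You are right that this is the delicate point and right that it fails for an arbitrary enclosing disk: for $A=\mathrm{diag}(0,1)$ with $\lambda_0=-5$ and $R_A=6$ one has $w(A-\lambda_0 Id)=6$ while $\|A-c(A)Id\|=\tfrac12$, so $h_1(A)=\tfrac1{12}<1$. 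Your fix --- require $D(\lambda_0,R_A)$ to be the smallest disk containing $W(A)$, so that $w(A-\lambda_0 Id)=R_A=\min_{\mu}w(A-\mu Id)\le w(A-c(A)Id)\le\|A-c(A)Id\|$ --- is correct and supplies an argument the paper omits; but note that it is an additional hypothesis not present in the statement as written, so it should be recorded explicitly (as a normalization of $\lambda_0$ and $R_A$) rather than used silently.
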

\begin{proof} 
Let $\lambda\in[0,1]$. Then,
\begin{eqnarray}\label{Re2}
\|A-c(A)Id\|&\leq&\lambda \|A-c(A)Id\|+(1-\lambda) \|A-\lambda_0Id\| \nonumber\\
&\leq &\lambda \|A-c(A)Id\| + 2(1-\lambda) w(A-\lambda_0Id) \nonumber \\
&=&w(A-\lambda_0Id)\left(2(1-\lambda)+ \lambda\frac{\|A-c(A) Id\|}{w(A-\lambda_0Id)}\right)\nonumber\\
&=&w(A-\lambda_0Id)h_{\lambda}(A),\nonumber\
\end{eqnarray}
where  $1\leq h_{\lambda}(A)\leq 2$ since $\|A-c(A)Id\|\leq\|A-\lambda_0Id\|\leq 2w(A-\lambda_0Id).$
This inequality completes the proof.
\end{proof}

Note that the previous result gives a positive answer at the Renuad's open question \eqref{Renaudk}.

\begin{corollary}
Under the same notation as in Theorem \ref{mainRenaud}, if $A-\lambda_0Id$ and $T-\mu_0 Id$ are normaloid operators  then, for any $\lambda,\mu \in[0,1]$
\begin{eqnarray} \label{normaloides}
|tr(PAT)-tr(PA)tr(PT)|&\leq&\sup_{\widetilde{P}\in \sh} |tr(\widetilde{P}AT)-tr(\widetilde{P}A)tr(\widetilde{\textsc{P}}T)|\nonumber \\
&\leq& dist(A, \mathbb C Id)dist(T, \mathbb C Id)\nonumber\\
&\leq& (2-\lambda)(2-\mu)\omega(A-\lambda_0 Id)\omega(T-\mu_0 Id)\nonumber\\
&\leq& (2-\lambda)(2-\mu)R_AR_T.
\end{eqnarray}
\end{corollary}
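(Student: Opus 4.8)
The plan is to read off the first two inequalities directly from Theorem \ref{mainRenaud} and to reduce the entire statement to a single observation about the constants $h_\lambda$ and $h_\mu$. The first two displayed relations are verbatim the conclusion of that theorem, so I would simply invoke it. The genuine content of the corollary is the third inequality, in which the factor $h_\lambda(A)h_\mu(T)$ is sharpened to $(2-\lambda)(2-\mu)$; the last inequality is then immediate.

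The key step is to show that the normaloid hypothesis forces $h_\lambda(A)\leq 2-\lambda$ (and, symmetrically, $h_\mu(T)\leq 2-\mu$). Recall from Theorem \ref{mainRenaud} that
$$h_\lambda(A)=2(1-\lambda)+\lambda\frac{\|A-c(A)Id\|}{w(A-\lambda_0Id)}.$$
In the general case the bound $h_\lambda(A)\leq 2$ comes from $\|A-c(A)Id\|\leq\|A-\lambda_0Id\|\leq 2w(A-\lambda_0Id)$, i.e.\ from the universal estimate $\|S\|\leq 2w(S)$. The point is that this factor of $2$ is precisely what the normaloid assumption removes: if $A-\lambda_0Id$ is normaloid then $w(A-\lambda_0Id)=\|A-\lambda_0Id\|$. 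Combining this with $\|A-c(A)Id\|=dist(A,\mathbb CId)=\min_{\lambda}\|A-\lambda Id\|\leq\|A-\lambda_0Id\|$ (here $c(A)$ is the minimizer defining the distance, whereas $\lambda_0$ is the centre of the circumscribing disc), I obtain
$$\frac{\|A-c(A)Id\|}{w(A-\lambda_0Id)}=\frac{\|A-c(A)Id\|}{\|A-\lambda_0Id\|}\leq 1,$$
whence $h_\lambda(A)\leq 2(1-\lambda)+\lambda=2-\lambda$.

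Multiplying the two estimates $h_\lambda(A)\leq 2-\lambda$ and $h_\mu(T)\leq 2-\mu$ and feeding them into the third line of Theorem \ref{mainRenaud} yields the third inequality of the corollary. The final inequality follows because $W(A)\subseteq D(\lambda_0,R_A)$ gives $W(A-\lambda_0Id)\subseteq D(0,R_A)$, hence $w(A-\lambda_0Id)\leq R_A$, and likewise $w(T-\mu_0Id)\leq R_T$. I do not expect a real obstacle here, since the statement is a corollary: the only subtlety worth flagging is that $c(A)$ and $\lambda_0$ are in general distinct complex numbers, so the work is done by the inequality $\|A-c(A)Id\|\leq\|A-\lambda_0Id\|$ together with the collapse $w(A-\lambda_0Id)=\|A-\lambda_0Id\|$ supplied by normaloidness.
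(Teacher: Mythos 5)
Your argument is correct and is precisely the intended route: the paper states this corollary without a separate proof, so it is meant to follow from Theorem \ref{mainRenaud} exactly as you do it, by noting that normaloidness gives $w(A-\lambda_0 Id)=\|A-\lambda_0 Id\|$ and that $\|A-c(A)Id\|=\mathrm{dist}(A,\mathbb{C}Id)\leq\|A-\lambda_0 Id\|$ (an inequality already used in the theorem's proof), whence $h_{\lambda}(A)\leq 2-\lambda$ and symmetrically $h_{\mu}(T)\leq 2-\mu$. Nothing is missing.
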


\end{document}